\definecolor{rltblue}{rgb}{0,0,0.75}
\newtheorem{thm}{Theorem}[section]
\newtheorem{theorem}[thm]{Theorem}
\theoremstyle{definition}
\newtheorem{definition}[thm]{Definition}
\newtheorem{notation}[thm]{Notation}
\def\upto{\mathop{\upharpoonright}}
\def\si{\sigma}
\newcommand{\Nat}{\mathbb{N}}
\newcommand{\ml}{Martin-L\"{o}f }
\newcommand{\sz}{$\Sigma^0_1$\ }
\DeclareMathOperator{\NCR}{NCR}
\title{$K$-trivials are $\NCR$}
\author{Antonio Montalb\'an}
\address{Department of Mathematics\\
University of Chicago\\
5734 S. University ave.\\
Chicago, IL 60637, USA}
\email{antonio@math.uchicago.edu}
\author{Theodore A. Slaman}
\address{Department of Mathematics,
University of California, Berkeley
Berkeley, CA 94720-3840 USA
}
\email{slaman@math.berkeley.edu}
\def\om{\omega}
\begin{document}

\maketitle

\section{Introduction}

In \cite{reimann.slaman:2007,reimann.slaman:2008}, Reimann and Slaman
raise the question ``For which infinite binary sequences $X$ do there
exist continuous probability measures $\mu$ such that $X$ is
effectively random relative to $\mu$?''

\subsection{Randomness relative to continuous measures}

We begin by reviewing the basic definitions needed to precisely
formulate this question.

\begin{notation}
  \begin{itemize}
  \item For $\sigma\in2^{<\omega}$, $[\sigma]$ is the basic open subset of
  $2^\omega$ consisting of those $X$'s which extend $\sigma$.
  Similarly, for $W$ a subset of $2^{<\omega}$, let $[W]$ be the open
  set given by the union of the basic open sets $[\sigma]$ such that
  $\sigma\in W$.
\item For $U\subseteq 2^\omega$, $\lambda(U)$ denotes the
  measure of $U$ under the uniform distribution.  Thus,
  $\lambda([\sigma])$ is $1/2^\ell$, where $\ell$ is the length of
  $\sigma$.
  \end{itemize}
\end{notation}

\begin{definition}
  A \textit{representation} $m$ of a probability measure $\mu$ on
  $2^\omega$ provides, for each $\sigma\in 2^{<\omega}$, a sequence of
  intervals with rational endpoints, each interval containing
  $\mu([\sigma])$, and with lengths converging monotonically to 0.
\end{definition}

\begin{definition} Suppose $Z \in 2^\omega$. A \emph{test relative to
    $Z$}, or \emph{$Z$-test}, is a set $W \subseteq \omega\times
  2^{<\omega}$ which is recursively enumerable in $Z$.  For
  $X\in2^\omega$, $X$ \emph{passes} a test $W$ if and only if there is
  an $n$ such that $X\not\in[W_n]$.
\end{definition}

\begin{definition}  Suppose that $m$ represents the measure $\mu$ on
  $2^\omega$ and that $W$ is an $m$-test.
  \begin{itemize}
  \item $W$ is \emph{correct for $\mu$} if and only if for all
    $n$, $\sum_{\sigma \in W_n} \mu([\sigma]) \leq 2^{-n}.$
  \item $W$ is \emph{Solovay-correct for $\mu$} if and only if
    $\sum_{n\in\omega}\mu([W_n])<\infty$.
  \end{itemize}
\end{definition}

\begin{definition}
  $X\in2^\omega$ is \emph{$1$-random relative to a representation $m$ of
  $\mu$} if and only if $X$ passes every $m$-test which is correct for
  $\mu$.  When $m$ is understood, we say that $X$ is 1-random relative
  to $\mu$.
\end{definition}

By an argument of Solovay, see \cite{nies:2009}, $X$ is $1$-random
relative to a representation $m$ of $\mu$ if an only if for every
$m$-test which is Solovay-correct for $\mu$, there are infinitely
many $n$ such that $X\not\in[W_n]$.  

\begin{definition}
  $X\in\NCR_1$ if and only if there is no representation $m$ of a
  continuous measure $\mu$ such that $X$ is 1-random relative to the
  representation $m$ of $\mu$.
\end{definition}

In \cite{reimann.slaman:2008}, Reimann and Slaman show that if $X$ is
not hyperarithmetic, then there is a continuous measure $\mu$ such
that $X$ is 1-random relative to $\mu$.  Conversely,
Kj{\o}s-Hanssen and Montalb\'an, see \cite{montalban:2005},
%
% Replace the above with a correct reference. 
% \citet{kjos-hanssen.montalban:2007}
%
%
have shown that if $X$ is an element of a countable $\Pi^0_1$-class,
then there is no continuous measure for which $X$ is 1-random.  As the
Turing degrees of the elements of countable $\Pi^0_1$-classes are
cofinal in the Turing degrees of the hyperarithmetic sets, the
smallest ideal in the Turing degrees that contains the degrees
represented in $\NCR_1$ is exactly the Turing degrees of the
hyperarithmetic sets.

In \citet{reimann.slaman:ta}, Reimann and Slaman pose the problem to
find a natural $\Pi^1_1$-norm for $\NCR_1$ and to understand its
connection with the natural norm mapping a hyperarithmetic set $X$ to
the ordinal at which $X$ is first constructed.  As of the writing of
this paper, this problem is open in general, but completed in
\cite{reimann.slaman:ta} for $X\in\Delta^0_2$.  

Suppose that $X\in\Delta^0_2$ and that for all $n$,
$X(n)=\lim_{t\to\infty}X_t(n)$, where $X_t(n)$ is a computable
function of $n$ and $t$.  Let $g_X$ be the convergence function for
this approximation, that is for all $n$, $g_X(n)$ is the least $s$
such that for all $t\geq s$ and all $m\leq n$, $X_t(m)=X(m)$.  Let
$f_X$ be function obtained by iterated application of $g_X$:
$f_X(0)=g_X(0)$ and $f_X(n+1)=g_X(f_X(n))$.

For a representation $m$ of a continuous measure $\mu$, the
granularity function $s_m$ maps $n\in\omega$ to the least $\ell$ found
in the representation of $\mu$ by $m$ such that for all $\sigma$ of
length $\ell$, $\mu([\sigma])<1/2^n$.  Note that, $s_m$ is
well-defined by the compactness of $2^\omega$.

\begin{theorem}[Reimann and Slaman~\cite{reimann.slaman:ta}]\label{1.7}
  If $X$ is 1-random relative the representation $m$ of $\mu$, then
  the granularity function $s_m$ for $\mu$ is eventually bounded by
  $f_X$.
\end{theorem}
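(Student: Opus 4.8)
The plan is to prove the contrapositive: assuming that $s_m(n)>f_X(n)$ for infinitely many $n$, I will produce an $m$-test that is Solovay-correct for $\mu$ and which $X$ fails, so that by the Solovay characterization recalled above $X$ cannot be $1$-random relative to $m$. The only data the construction may use are the representation $m$ and the fixed computable approximation $(X_t)$, both of which are available effectively from $m$; it may not mention $g_X$, $f_X$ or $X$ itself, which enter only in the verification. The device that keeps the $\mu$-mass under control is the granularity: each $s_m(j)$ is a length, computable from $m$, at which every cylinder has measure below $2^{-j}$, so a string committed at ``depth $j$'' costs at most $2^{-j}$ in $\mu$-mass.

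First I would organize the test as a greedy deepening along the approximation. Running $(X_t)$ in stages, I maintain a current committed prefix; upon reaching granularity depth $j$ I enumerate the current value $X_t\upto s_m(j)$, which contributes $\mu$-mass below $2^{-j}$ by the choice of length, and then advance to depth $j+1$; whenever the approximation revises a bit inside an already committed region I discard the commitment and begin deepening afresh. Along a single uninterrupted run the committed masses telescope to at most $\sum_j 2^{-j}$, so the test is Solovay-correct provided the number of revisions --- equivalently, the number of mind-changes of $(X_t)$ inside committed regions --- is kept summable. This is where the recursion $f_X(0)=g_X(0)$, $f_X(n+1)=g_X(f_X(n))$ enters: by stage $f_X(n+1)$ the first $f_X(n)+1$ bits of the approximation have reached their final values, so the iterate $f_X$ bounds how often a prefix of a given depth can still be revised, and hence controls the extra mass charged by interruptions.

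For the capturing half the relevant fact is that a prefix committed at depth $j$, of length $s_m(j)$, is a genuine initial segment of $X$ precisely once the approximation has settled on its first $s_m(j)$ bits, that is, from stage $g_X(s_m(j)-1)$ onward. Thus the construction captures $X$ at depth $j$ exactly when the deepening reaches that depth at a stage past $g_X(s_m(j)-1)$, and I would use the hypothesis $s_m(n)>f_X(n)$ for infinitely many $n$, together with the defining iteration of $f_X$, to guarantee that infinitely many committed depths are reached late enough to be certified while the validated masses still sum to a finite total. The main obstacle is exactly the tension between the two halves: small $\mu$-mass favors shallow commitments, whereas a correct, capturing commitment requires the prefix to be deep enough to have stabilized, i.e.\ to have passed stage $g_X(s_m(j)-1)$. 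Reconciling these --- calibrating the deepening schedule and the revision bookkeeping so that the geometric mass bound $\sum_j 2^{-j}$ survives the interruptions while infinitely many commitments are certified against $g_X(s_m(j)-1)$ --- is where the granularity hypothesis and the iterate $f_X$ do their work, and I expect it to be the crux of the argument.
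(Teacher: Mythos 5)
You should first note that the paper itself gives no proof of Theorem~\ref{1.7}---it is quoted from Reimann and Slaman---so your attempt has to be judged against what a correct argument must accomplish. Your contrapositive framing, the cost accounting (a string of length $s_m(j)$ costs at most $2^{-j}$ of $\mu$-mass), and the identification of the settling threshold $g_X(s_m(j)-1)$ are all sound, but both halves of your plan have genuine gaps. The first is Solovay-correctness: nothing in the greedy-deepening-with-restarts construction caps the number of restarts, and the total mass is essentially $\sum_j 2^{-j}\cdot(\hbox{number of distinct values taken by }X_t\upto s_m(j))$. The only bound on how often the length-$s_m(j)$ prefix is revised is its settling stage $g_X(s_m(j)-1)$, and under your standing hypothesis that $s_m$ escapes $f_X$ infinitely often this is exactly the quantity that is out of control; it can exceed $2^{j}$ for every $j$, making the sum diverge. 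Your claim that the iterate $f_X$ ``bounds how often a prefix of a given depth can still be revised'' has the logic backwards: $f_X(n+1)$ bounds revisions only of prefixes of length at most $f_X(n)+1$, whereas your depth-$j$ commitment has length $s_m(j)$, which by hypothesis lies infinitely often \emph{above} the corresponding value of $f_X$; no comparison with $2^{j}$ follows. (It is legitimate for the verification to use $g_X$ and $f_X$ non-effectively, but the mass must actually be finite, and for this construction it need not be.)

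The second gap is that the capturing half is deferred to ``the crux,'' i.e.\ not carried out, and the missing device is precisely what makes the theorem true: since the test may consult only $m$ and $(X_t)$, its sole source of provably late stages is the granularity function itself, so one samples the approximation \emph{at the stages} $s_m(j)$. Concretely, let level $k$ enumerate $X_{s_m(j)}\upto s_m(k)$ for $j=k+1,k+2,\dots$, stopping once $k^{2}$ distinct strings have been enumerated; then $\mu([S_k])\le k^{2}2^{-k}$, so the test is Solovay-correct outright, with no appeal to facts about $X$. Since the stages $s_m(j)$ are nondecreasing and unbounded, level $k$ can fail to capture $X\upto s_m(k)$ only if $k^{2}$ distinct wrong values appear before the first sample past the settling stage, which forces $s_m(k+k^{2})<g_X(s_m(k)-1)$. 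Writing $j(k)$ for the least $j$ with $f_X(j)\ge s_m(k)-1$ (normalizing $g_X(n)>n$ so that $f_X$ is strictly increasing), monotonicity of $g_X$ gives $g_X(s_m(k)-1)\le f_X(j(k)+1)$, so failure at level $k$ yields $j(k+k^{2})\le j(k)+1$. If capture failed at every sufficiently large level, then along the sequence $k_{i+1}=k_i+k_i^{2}$, which grows at least geometrically, $j$ increases by at most one per step, whence $j(n)<n$ for all large $n$; but $s_m(n)>f_X(n)$ forces $j(n)\ge n$, contradicting the hypothesis that this happens infinitely often. So infinitely many levels capture $X$, and replacing $S_n$ by $\bigcup_{k\ge n}S_k$ turns infinitely many hits into cofinitely many, as the paper's Solovay criterion requires; hence $X$ is not 1-random relative to $m$. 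This sampling-at-$s_m$ device together with the pigeonhole argument is what your proposal lacks, and it is worth observing that the same difficulty---bounding the mass of enumerated guesses at initial segments---is exactly what the paper's main theorem expends all of its $K$-triviality machinery (the class $U$, the class $V$, and the confirmation step) to overcome.
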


Thus, there is a continuous measure relative to which $X$ is 1-random
if and only if there is a continuous measure whose granularity is
eventually bounded by $f_X$.  The latter condition is arithmetic,
again by a compactness argument.

\subsection{$K$-triviality}

$K$-triviality is a property of sequences which characterizes another
aspect of their being far from random.  We briefly review this notion and
the results surrounding it.  A full treatment is given in
Nies~\cite{nies:2009}.

For $\sigma\in2^{<\omega}$, let $K(\sigma)$ denote the prefix-free
Kolmogorov complexity of $\sigma$.  Intuitively, given a universal
computable $U$ with domain an antichain in $2^{<\omega}$, $K(\sigma)$
is length of the shortest $\tau$ such that $U(\tau)=\sigma$.
Similarly, for $X\in2^\omega$,  let $K^X(\sigma)$ denote the prefix-free
Kolmogorov complexity of $\sigma$ relative to $X$.  That is, $K^X$ is
determined by a function universal among those computable relative to $X$.

\begin{definition}\label{1.8}
  A sequence $X\in2^\omega$ is \emph{$K$-trivial} if and only if there is a
  constant $k$ such that for every $\ell$, $K(X\restriction\ell) \leq
  K(0^\ell)+k$, where $0^\ell$ is the sequence of $0$'s of length
  $\ell$.
\end{definition}

By early results of Chaitin and Solovay and later results of Nies and
others, there are a variety of equivalents to $K$-triviality and a
variety of properties of the $K$-trivial sets.  For example, $X$ is
$K$-trivial if and and only if for every sequence $R$, $R$ is 1-random
for $\lambda$ if and only if $R$ is 1-random for $\lambda$ relative to
$X$.

In the next section, we will apply the following.

\begin{theorem}[Nies~\cite{nies:2009}, strengthening
  Chaitin~\cite{chaitin:1976}]\label{1.9} 
  If $X$ is $K$-trivial, then there is a computably enumerable and
  $K$-trivial set which computes $X$.
\end{theorem}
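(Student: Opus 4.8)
The plan is to split the argument into two stages: first show that a $K$-trivial $X$ is $\Delta^0_2$, and then, working from a computable approximation of $X$, enumerate a set $B$ that codes $X$ and is itself $K$-trivial. The bridge between the two stages is the cost-function characterization of $K$-triviality, which converts the combinatorial condition $K(X\restriction\ell)\le K(0^\ell)+k$ into a summability condition on the changes of an approximation; this is what will let me control the complexity of the set I enumerate.

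For the first stage I would invoke Chaitin's counting bound: there is a constant $c$ such that for every $\ell$ the number of $\sigma\in 2^\ell$ with $K(\sigma)\le K(0^\ell)+k$ is at most $2^{k+c}$. Fixing the constant $k$ witnessing that $X$ is $K$-trivial, I consider the class $\mathcal{T}=\{Y : (\forall \ell)\,K(Y\restriction\ell)\le K(0^\ell)+k\}$. Since $K(\tau)\le m$ is a $\Sigma^0_1$ condition and $\ell\mapsto K(0^\ell)$ is computable from $\emptyset'$, membership in $\mathcal{T}$ is $\Pi^0_1$ relative to $\emptyset'$. By the counting bound each level of the associated tree has at most $2^{k+c}$ nodes, so $\mathcal{T}$ has only finitely many members; in particular $X$ is an isolated point of $\mathcal{T}$. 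As the isolated points of a $\Pi^0_1(\emptyset')$ class are $\emptyset'$-computable, $X\le_T\emptyset'$, i.e. $X$ is $\Delta^0_2$.

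For the second stage I would use the theorem of Nies that $A$ is $K$-trivial if and only if some computable approximation $(A_s)$ of $A$ obeys the standard cost function $c(x,s)=\sum_{x<w\le s}2^{-K_s(w)}$, meaning $\sum_s c(x_s,s)<\infty$, where $x_s$ is the least position at which $A_{s-1}$ and $A_s$ disagree. Applying the forward direction to $X$ yields an approximation $(X_s)$ of finite total cost. I would then run a coding construction: enumerate a c.e. set $B$ using one column of movable markers per bit of $X$, advancing and enumerating a marker exactly when $X_s$ changes, so that $X$ is recoverable from $B$ as a Turing oracle, while arranging that each enumeration into $B$ occurs at a stage whose $c$-cost is charged against the corresponding change of $X$. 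Because $(X_s)$ has finite total cost, the induced approximation of $B$ then also obeys $c$, and the converse direction of the cost-function theorem gives that $B$ is $K$-trivial. Since $X\le_T B$ by construction, this completes the argument.

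The main obstacle is this second-stage construction rather than the bookkeeping: one must design the marker discipline so that (i) $B$ stays computably enumerable, hence can only grow, even though the bits of $X$ oscillate, (ii) $X$ is genuinely computable from $B$ and not merely from $B\oplus\emptyset'$, and (iii) the cost of every enumeration is dominated by the cost already incurred by $X$. Reconciling the monotonicity of a c.e. set with the need to track the \emph{final} value of an oscillating $\Delta^0_2$ bit is the delicate point, and it is exactly there that the finiteness of the total cost of $(X_s)$ must be spent. I also note that the forward direction of the cost-function characterization used above is itself the deep input, resting on the golden-run (decanter) method; I would cite it rather than reprove it.
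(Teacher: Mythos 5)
The paper does not prove Theorem~\ref{1.9} at all; it imports it from Nies~\cite{nies:2009} (crediting Chaitin for the $\Delta^0_2$ part), so there is no in-paper argument to compare against. Your outline is, in substance, the standard proof in that cited source, and it is correct: Chaitin's counting theorem confines the $K$-trivials with constant $k$ to a $\Pi^0_1(\emptyset')$ class whose tree has width at most $2^{k+c}$, hence finitely many paths, all of them $\emptyset'$-computable; then the cost-function characterization together with a change-set construction produces the c.e.\ $K$-trivial upper bound. The one step you flag but leave open --- your point (ii), that $X$ must be computable from $B$ alone and not from $B\oplus\emptyset'$ --- has a standard resolution that costs nothing extra: take $B$ to be the change set, which contains $\langle x,i\rangle$ exactly when the approximation to $X$ changes at position $x$ at least $i$ times. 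With oracle $B$ one queries $\langle x,1\rangle,\langle x,2\rangle,\dots$ until finding the least $i$ with $\langle x,i\rangle\notin B$; this reveals the exact number of changes at $x$, so one runs the approximation until that many changes have occurred and reads off $X(x)$. Your point (iii) is then automatic from monotonicity of the cost function, since $\langle x,i\rangle\ge x$ gives $c(\langle x,i\rangle,s)\le c(x,s)$, so the total cost of $B$'s enumeration is bounded by that of $X$'s approximation. One small correction of emphasis: the finiteness of the total cost is what makes $B$ $K$-trivial (via the easy direction of the cost-function theorem); the recoverability of the oscillating bits of $X$ from the monotone set $B$ comes simply from each bit changing only finitely often, not from the summability of the costs.
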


The following theorem follows from the work of Nies and others \cite{nies:2009}.
Some versions of  this  property have been used by Ku\v{c}era extensively, e.g.\ in \cite{MR820784}.

\begin{theorem}\label{1.10}  Suppose $X$ is
  $K$-trivial and $\{U_e^X:e\in\om\}$ a uniformly $\Sigma^{0,X}_1$
  family of sets.  Then, there is a computable function $g$ and a
  $\Sigma^0_1$ set $V$ of measure less than 1 such for every $e$, if
  $\lambda(U_e^Z)<2^{-g(e)}$ for every oracle $Z$, then $U_e^X\subseteq V$.
\end{theorem}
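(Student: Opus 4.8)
The plan is to reduce to the case that $X$ itself is computably enumerable and then to build $V$ by covering each $U_e^X$ along a computable approximation to $X$, paying for the covering with a cost-function argument that exploits $K$-triviality. First I would use Theorem~\ref{1.9} to fix a computably enumerable $K$-trivial set $A$ with $A \geq_T X$, say $X = \Phi^A$, and pass from the family $\{U_e^{(\cdot)}\}$ to the family $\widehat U_e^{(\cdot)}$ defined by $\widehat U_e^{B} = U_e^{\Phi^{B}}$ (enumerating a string as soon as it appears in $U_e$ relative to the converged part of $\Phi^B$). Then $\widehat U_e^{A} = U_e^X$, the family is still uniformly $\Sigma^{0,\cdot}_1$, and with the usual use conventions every string enumerated into $\widehat U_e^{B}$ already lies in $U_e^{Y}$ for some total oracle $Y$, so the hypothesis $\lambda(U_e^Z) < 2^{-g(e)}$ for all $Z$ gives $\lambda(\widehat U_e^{B}) < 2^{-g(e)}$ for all $B$. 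Renaming, I may thus assume $X$ is c.e.\ and $K$-trivial and fix an increasing enumeration $X_s$, so that each bit of $X$ changes at most once and $X_s \restriction u \to X \restriction u$ for every $u$.

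Next I would construct $V$ by processing stages $s$ and indices $e \le s$. For each $e$ I maintain a budget $\beta_e = 2^{-e-1}$ and a current cover; at stage $s$ I enumerate into $V$ the finite open set $U_{e,s}^{X_s}$ of strings that have entered $U_e$ by stage $s$ using oracle $X_s$ with use at most $s$, capping the total measure ever enumerated for $e$ at $\beta_e$. The hypothesis ``$\lambda(U_e^Z)<2^{-g(e)}$ for every oracle $Z$'' is used precisely here: applied to any completion of the finite string $X_s\restriction s$ it bounds each snapshot $\lambda(U_{e,s}^{X_s}) < 2^{-g(e)}$, so no single commitment is large. Since $V$ only grows and, for each $\sigma\in U_e^X$ with use $u$, the approximation satisfies $X_s\restriction u = X\restriction u$ from some stage on, $\sigma$ is eventually enumerated into $V$ provided $e$ never reaches its cap. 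Capping guarantees $\lambda(V) \le \sum_e \beta_e = 1/2 < 1$ unconditionally; the content is to show that a requirement whose hypothesis holds never reaches its cap, so that $U_e^X \subseteq V$ for exactly the relevant $e$.

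The crux, and the main obstacle, is bounding the total \emph{re-covering} measure: each time a number enters $X$ below the use of a string already committed for $e$, that commitment may be invalidated and redone, adding up to another $2^{-g(e)}$, and a single small element entering $X$ can injure the covers of many requirements at once, so a naive count of injuries is unbounded and the bound must come from the $K$-triviality of $X$. I would control this with a cost-function argument in the style of Nies~\cite{nies:2009}: as I commit a cover for $e$ using $X_s\restriction u$, I simultaneously enumerate Kraft--Chaitin requests of total weight comparable to $2^{-g(e)}$ depositing description mass on markers below $u$, thereby raising the standard cost $c_{\mathbb K}(x,s)$ for $x<u$. Because $X$ is $K$-trivial it obeys $c_{\mathbb K}$, so there is an enumeration of $X$ along which $\sum_s c_{\mathbb K}(x_s,s)$ is finite; every injury to a protected cover then charges at least the mass deposited there, so the total re-covering measure is bounded by a constant multiple of this finite cost. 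Keeping the total Kraft--Chaitin weight below $1$ and choosing $g$ to grow fast enough that this constant times $2^{-g(e)}$ stays below $\beta_e$ termwise makes each good requirement stay under budget and forces $\lambda(V)<1$. The remaining delicate point is the apparent circularity—the deposits depend on the very enumeration of $X$ whose cost they are meant to bound—which I would resolve by the recursion theorem, running the cost-function construction and the covering construction simultaneously.
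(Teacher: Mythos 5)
Your reduction to a c.e.\ $K$-trivial oracle via Theorem~\ref{1.9} is fine, and you correctly locate the crux: bounding the total re-covering measure caused by changes of the approximation below the uses. But the cost-function resolution you propose for that crux has a genuine gap, and it is exactly the gap that makes this kind of theorem hard. For your deposits to raise the standard cost function $c_{\mathbb K}$ at all, they must be enumerated as Kraft--Chaitin requests into an actual prefix-free machine, hence have total weight at most $1$; only then does a deposit of weight $\delta$ on a marker $w$ guarantee $2^{-K_t(w)}\geq \delta 2^{-d}$, with $d$ the coding constant of your machine. Now do the accounting. Each re-covering for index $e$ needs a \emph{fresh} marker at or above the new use (a marker below the use cannot charge a change occurring just below the use), so the weight you spend is proportional to the number of commitments, which is $1$ plus the number of injuries. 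The only bound available on the number of injuries is the one obedience gives you: $(\hbox{injuries for } e)\times(\hbox{deposit size})\leq 2^{d}C$, where $C$ is the total $c_{\mathbb K}$-cost of an obedient approximation of $X$. That bound is scale-invariant: shrinking the deposits to save weight weakens the charge per injury by the same factor, so the total weight you may be forced to spend on injured markers remains on the order of $2^{d}C$ no matter how you choose $g$, $\beta_e$, or the deposit sizes. Since $C$ is finite but not bounded by anything under your control, your Kraft--Chaitin set cannot be kept below weight $1$ in advance; and if you cap it at $1$, a requirement with a true hypothesis can be starved after the cap is reached, so $U_e^X\subseteq V$ fails. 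The recursion theorem does not repair this: the circularity is in the resource budget, not in the indices. (Nor can you escape by letting the construction define its own cost function $c^*$ and asking $X$ to obey it: $K$-triviality only guarantees obedience to cost functions dominated by a constant multiple of $c_{\mathbb K}$, which again requires your deposits to be genuine description mass of bounded total weight.) This is precisely the failure of the naive ``charge injuries to $K$'' argument that the decanter/golden-run machinery of Nies~\cite{nies:2009} was invented to overcome. A secondary issue, fixable but unaddressed, is that your construction follows a fixed c.e.\ enumeration of $X$, while obedience is only guaranteed for \emph{some} computable approximation; injuries must be synchronized with the changes of an obedient approximation for the charging to mean anything.

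For contrast, the paper never attempts such a construction: it quotes as a black box the theorem (Kjos-Hanssen~\cite{MR2336587}, together with $K$-trivial $=$ low for ML-randomness) that $X$ is $K$-trivial if and only if the section $T^X$ of a member of a universal oracle Martin-L\"of test is contained in a $\Sigma^0_1$ class $V$ of measure less than $1$, and then proves Theorem~\ref{1.10} by a purely effective index computation: truncate each $U_e$ so that it becomes a component of an oracle ML test, and let $g(e)$ come from the index of that component inside the universal test, so that smallness of $U_e^Z$ for all $Z$ forces $U_e^X\subseteq T^X\subseteq V$. If you want a self-contained proof along your lines, you would in effect be reproving that covering theorem, and that requires the golden-run method, not a one-level deposit-and-charge argument.
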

\begin{proof} (George Barmpalias)
Let $\big((E_i^e)\big)_{e\in\Nat}$ be a uniform sequence of all oracle \ml
tests. A standard construction of a universal oracle \ml test $(T_i)$ 
(e.g.\ see  \cite{nies:2009}) gives a recursive function $f$ 
such that $\forall Z\subseteq \om\ (E_{f(i,e)}^{e,Z}\subseteq T_i^Z)$ for all $e,i\in\Nat$.
Let $T:=T_2$ and $f(e):=f(2,e)$ for all $e\in\Nat$, so that $\mu( T^Y) \leq 2^{-2}$ for all $Y \in 2^\omega$ and $E_{f(e)}^e\subseteq T$ for all $e\in\Nat$. 
In \cite{MR2336587}  it was shown that $X$ is $K$-trivial iff for some member $T$ of a universal oracle Martin-L\"of test, there is a $\Sigma^0_1$ class $V$ with $T^X \subseteq V$ and $\mu(V) < 1$. 

Now given a uniform enumeration $(U_e)$ of oracle \sz classes we have the following property of $T$:
\begin{quote}\label{Tproperty}
{There is a recursive function $g$ such that for each $e$, \\ either 
$\exists Z\subseteq \om\ (\mu(U_e^{Z})\geq 2^{-g(e)-1})$, or $\forall Z\subseteq \om\ (U_e^Z\subseteq T^Z)$.}
\end{quote}
To see why this is true, note that every $U_e$ can be effectively
mapped to the oracle \ml test $(M_i)$ where $M_i^Z=U_e^Z[s_i]$ and $s_i$
is the largest stage such that $\mu(U_e^Z[s_i])<2^{-i-1}$ (which could be infinity).
Effectively in $e$ we can get an index $n$ of $(M_i)$.
It follows that if $\mu(U^Z_e)<2^{-f(n)-1}$ for all $Z$, then $U_e^X = M_{f(n)}^X = E^{n,X}_{f(n)} \subseteq T^X \subseteq V$. 
So $g(e)=f(n)+1$ is as wanted.
\end{proof}

\subsection{\texorpdfstring{$X$ is $K$-trivial implies $X\in\NCR_1$}{K-trivial implies NCR-1}}

Intuitively, $X\in\NCR_1$ asserts that $X$ is not effectively random
relative to any continuous measure and $X$ is $K$-trivial asserts that
relativizing to $X$ does change the evaluation of randomness relative
to the uniform distribution.  In the next section, we connect the two
notions by showing that if $X$ is $K$-trivial then $X\in\NCR_1$.

\section{The Main Theorem}

\begin{theorem}\label{2.1}
Every $K$-trivial set belongs to $\NCR_1$.
\end{theorem}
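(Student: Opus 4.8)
The plan is to show that if $X$ is $K$-trivial, then $X$ cannot be $1$-random relative to any representation $m$ of a continuous measure $\mu$. The main tool will be Theorem~\ref{1.10}, which for a $K$-trivial $X$ supplies a computable $g$ and a $\Sigma^0_1$ set $V$ of measure less than $1$ such that whenever a uniformly $\Sigma^{0,X}_1$ family $\{U_e^X\}$ has the property that $\lambda(U_e^Z) < 2^{-g(e)}$ for all oracles $Z$, then $U_e^X \subseteq V$. The strategy is to assume toward a contradiction that $X$ is $1$-random relative to some continuous $\mu$, and then to manufacture an appropriate uniformly $\Sigma^{0,X}_1$ family to feed into Theorem~\ref{1.10} so as to build an $m$-test (correct for $\mu$) that $X$ fails.

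First I would set up the right family of sets. Fix a representation $m$ of the continuous measure $\mu$ relative to which $X$ is supposedly $1$-random. Since $X$ is $K$-trivial it is $\Delta^0_2$ (indeed computable from a c.e.\ $K$-trivial set by Theorem~\ref{1.9}), so the approximation machinery behind the convergence function $f_X$ and Theorem~\ref{1.7} is available: the granularity $s_m$ of $\mu$ is eventually bounded by $f_X$. The key point is that continuity of $\mu$ forces $\mu([\sigma]) \to 0$ uniformly as $|\sigma|$ grows, so the granularity function $s_m$ is genuinely defined and the measure spreads out. I would use this to convert the hypothesis of $1$-randomness into statements about how much $\lambda$-measure various $X$-definable open sets can carry, setting up families $U_e^Z$ whose uniform-measure bound $\lambda(U_e^Z) < 2^{-g(e)}$ can be controlled by choosing the indices relative to $g$.

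The heart of the argument is then to use the fixed $\Sigma^0_1$ set $V$ with $\lambda(V) < 1$ together with the continuity/granularity of $\mu$ to produce, in a way computable from $m$ alone, a test that is correct for $\mu$ but that $X$ does not pass. Concretely, because the $U_e^X \subseteq V$ are trapped inside a single $\Sigma^0_1$ set of measure bounded away from $1$, while the $1$-randomness of $X$ relative to $\mu$ would require $X$ to escape every correct $m$-test, I would aim to translate the containment $U_e^X \subseteq V$ into a Solovay-correct $m$-test (using the equivalent formulation via Solovay-correctness stated after the definition of $1$-randomness) that captures $X$ infinitely often, contradicting randomness. The design of the family $\{U_e^X\}$ must be arranged so that $X$ itself lies in each $U_e^X$ — this is where I would encode the failure of $\mu$-randomness of $X$ into the $\Sigma^{0,X}_1$ definitions, using the granularity bound to ensure the $\lambda$-measures are small enough to invoke Theorem~\ref{1.10}.

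The main obstacle I anticipate is the interface between the two measures: Theorem~\ref{1.10} controls the \emph{uniform} measure $\lambda$ of the sets $U_e^Z$, whereas randomness relative to $\mu$ is a statement about $\mu$-measure. The crux will be to exploit continuity of $\mu$ to pass between these two worlds — roughly, to show that a set which is small in $\lambda$ (so as to satisfy $\lambda(U_e^Z) < 2^{-g(e)}$) can still be arranged to carry enough $\mu$-measure, or conversely to build the $\mu$-correct test out of the $\lambda$-controlled set $V$, using the granularity function $s_m$ to calibrate lengths. Making this translation uniform and effective — so that the resulting $\mu$-test is genuinely an $m$-test computable from the representation — is where I expect the real work to lie, and it is precisely here that the $K$-triviality hypothesis (through Theorem~\ref{1.10}) must do its job of confining $X$'s $\Sigma^{0,X}_1$ behavior inside a single ground-model $\Sigma^0_1$ set.
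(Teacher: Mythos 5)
You have assembled the right ingredients --- Theorem~\ref{1.9} to replace the $K$-trivial set by a computably enumerable $K$-trivial $X$ computing it, Theorem~\ref{1.7} to bound the granularity $s_m$ by the iterated convergence function, and Theorem~\ref{1.10} to supply $g$ and $V$ --- and you have correctly located the crux: converting control of $\lambda$-measure (the hypothesis and conclusion of Theorem~\ref{1.10}) into an $m$-test that is correct for $\mu$. But your proposal stops exactly at that point: ``where I expect the real work to lie'' \emph{is} the theorem, and nothing in your sketch indicates how to do it. Worse, the one concrete design decision you do make --- arranging the family so that $X$ itself lies in each $U_e^X$ --- is a dead end. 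The conclusion of Theorem~\ref{1.10} would then only tell you that $X\in V$, and membership of $X$ in a fixed $\Sigma^0_1$ class of $\lambda$-measure less than $1$ contradicts nothing: it carries no $\mu$-measure information at all, and every sequence whatsoever belongs to such a class. The containment $U^X\subseteq V$ cannot be used to ``capture'' $X$; it has to be used quantitatively, as a way of paying for $\mu$-mass with $\lambda$-mass.

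What the paper actually does is a charging argument in which the sets placed in $U$ have nothing to do with the location of $X$ or $Y$ in $2^\om$. By the recursion theorem one builds a single oracle $\Sigma^0_1$ class $U=U_e$ whose index $e$ is known in advance, promising $\lambda(U^Z)<2^{-g(e)}$ for every oracle $Z$, so that $U^X\subseteq V$ is guaranteed. Requirement $R_n$ wants to enumerate $y_{n,s}=Y_s\upto f(n,s)$, which has $\mu$-measure at most $2^{-n}$ when the approximation is correct, into the test component $S_n$; before doing so it enumerates a clopen ``token'' $\si_n$ of $\lambda$-measure $2^{-n}$, chosen disjoint from $V_s$ and $U_s^{X_s}$, into $U^{x_{n,s}}$, where $x_{n,s}$ is the use of the computation of $y_{n,s}$, and then waits for $\si_n$ to appear in $V$ --- which must happen whenever $x_{n,s}$ is a true initial segment of $X$, precisely because $U^X\subseteq V$. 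Only upon that confirmation does $R_n$ put $y_{n,t}$ into $S_n$. Thus every unit of $\mu$-mass entering the test is matched by fresh $\lambda$-mass entering $V$, giving $\sum_n\mu(S_n)\leq\lambda(V)<1$, a Solovay-correct $m$-test, while $Y\in[S_n]$ for cofinitely many $n$. Note also that the computable enumerability of $X$ is used for more than $\Delta^0_2$-ness: when $X_s$ changes below the use, the new use $x_{n,t}$ is incomparable with the old one, so stale tokens drop out of $U^{X_t}$, and this is what keeps $\lambda(U^Z)$ below $2^{-g(e)}$ for \emph{every} oracle $Z$ and legitimizes the appeal to Theorem~\ref{1.10}. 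None of this machinery --- the recursion theorem, the tokens, the confirmation step, the measure accounting --- is present in your proposal, so it remains a plan rather than a proof.
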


\begin{proof}
  Let $Y$ be $K$-trivial and let $\mu$ be a continuous measure with
  representation $m$; we want to show $Y$ is not $\mu$-random.  By
  Theorem~\ref{1.9}, let $X$ be a computably enumerable $K$-trivial
  sequence that computes $Y$.  Let $f$ be the iterated convergence
  function as defined above for the computable approximation to $Y$
  given by approximating $X$'s computation of $Y$.  Since $X$ is
  computably enumerable, $X$ can compute the convergence function for
  its own enumeration and hence $f$ is computable from $X$.

  Let $s_m$ be the granularity function for $\mu$ as represented by
  $m$.  By Theorem~\ref{1.7}, $f$ eventually dominates $s_m$.  By
  changing finitely many values of $f$, we may assume that $f$
  dominates $s_m$ everywhere.  So, we have that for every $n$
  \[
  \mu([Y\upto f(n)])\leq 2^{-n}.
  \]
  Further, we may assume that $f$ can be obtained as the limit of a
  computable function $f(n,s)$ such that for all $s$, $f(n-1,s)\leq
  f(n,s)\leq f(n,s+1)$.  

  We will build an $m$-test $\{S_i:i\in\om\}$ which is Solovay-correct
  for $\mu$ and which $Y$ does not pass, thereby concluding that $Y$
  is not $\mu$-random.  That is, we plan to build $\{S_i:i\in\om\}$ to
  be a uniformly $\Sigma^{0,m}_1$ sequence of sets such that
  $\sum_{i\in\omega}\mu(S_i)$ is bounded and such that there are
  co-finitely $i$ for which $Y\in[S_i]$.  Our construction will not be
  uniform. 

$X$'s $K$-triviality is exploited in the form of Theorem~\ref{1.10}.
Let $V$ and $g$ be given by  Theorem~\ref{1.10} where $\{U_e^X:e\in\om\}$ is a listing of all $\Sigma^{0,X}_1$ sets.
We will build an oracle $\Sigma^0_1$ class $U$ along the construction.
We use the recursion theorem to assume that in advance we know an index $e$ such that $U=U_e$.
During the construction we will make sure that for every oracle $Z$, $\lambda(U^Z)<2^{-g(e)}$.
Theorem~\ref{1.10} then implies that $U^X\subseteq V$ where $V$ is a $\Sigma^0_1$ class of measure less than 1.
To simplify our  notation, let $a$ denote $g(e)$.
Furthermore, assume $a$ is large enough so that $\lambda(V)+2^{-a}<1$.

%  $X$'s $K$-triviality is exploited in the form of Theorem~\ref{1.10}.
%  That is, we will build the sequence $\{U_e^X:e\in\om\}$ over the
%  course of our construction.  We devote the enumeration of $U_e$ to
%  the construction of an $m$-test.  This test will be Solovay correct,
%  provided that the $e$th recursive function $g$ and recursively
%  enumerable set $V$ satisfies that $V$ is of measure $\frac{1}{2}$
%  and if $\lambda(U_e^X)<2^{-g(e)}$, then $U_e^X\subseteq V$.

%  Now we focus our attention on the $e$ associated with a pair $g$ and
%  $V$ satisfying the conclusion of Theorem~\ref{1.10}.  To simply our
%  notation, we let $U$ denote $U_e$, $a$ denote $g(e)$, and let
%  $\{S_i:i\in\om\}$ denote the $m$-test being enumerated.  In order to
%  employ $g$ and $V$, we will ensure that $\lambda(U)$ is less than or
%  equal to $2^{-a}$

  We use the approximation to $X$ as a computably enumerable set to
  enumerate approximations to initial segments of $Y$ into the sets
  $S_i$; we rely on the $K$-triviality of $X$ to keep the total
  $\mu$-measure of the $S_i$'s bounded.  

  For each $n>a$ we have a requirement $R_n$ whose task is to
  enumerate $Y\upto f(n)$ into $S_n$.  Let $y_{n,s}=Y_s\upto f(n,s)$
  the stage $s$ approximation to $Y\upto f(n)$.  Let $x_{n,s}$ be the
  initial segment of $X_s$ necessary to compute $y_{n,s}$ and
  $f(n,s)$.  So, if $y_{n,s+1}\neq y_{n,s}$, it is because
  $x_{n,s+1}\neq x_{n,s}$.  In this case, $x_{n,s+1}$ is not only
  different than $x_{n,s}$, but also incomparable.  At stage $s$,
  $R_n$ would like to enumerate $y_{n,s}$ into $S_n$, but before doing
  that it will {\em ask for confirmation} using the fact that
  $U^X\subseteq V$.  Since we are constrained to keep $\lambda(U^X)$
  less than or equal to $2^{-a}$, we will restrict $R_n$ to enumerate
  at most $2^{-n}$ measure into $U^X$.  The reason why we need a
  bit of security before enumerating a string in $S_n$ is that we have
  to ensure that $\sum_i\mu(S_i)$ is bounded.  For this purpose, we
  will only enumerate mass into $S_n$ when we see an equivalent mass
  going into $V$. 

  {\bf Action of requirement $R_n$:} 
  \begin{enumerate}
  \item The first time after $R_n$ is initialized, $R_n$ chooses a
    clopen subset of $2^{\om}$, $\si_n$, of $m$-measure $2^{-n}$, that
    is disjoint form $V_s$ and $U_s^{X_s}$.  Note that since $V$ and
    $U^{X_s}$ have measure less than $\lambda(V)+2^{-a}<1$, we can always
    find such a clopen set.  Furthermore we can chose $\si_n$ to be
    different from the $\si_i$ chosen by other requirements $R_i$,
    $i>a$.  We note the value of $\si_n$ might change if $R_n$ is
    initialized.
  \item To {\em confirm} $x_{n,s}$, requirement $R_n$ enumerates $\si_n$
    into $U^{x_{n,s}}$.  Requirement $R_n$ will not be allowed to
    enumerate anything else into $U^{X_s}$ unless $X_s$ changes below
    $x_{n,s}$.  This way $R_n$ is always responsible for at most
    $2^{-n}$ measure enumerated in $U^{X_s}$.
  \item Then, we wait until a stage $t>s$ such that

    \begin{enumerate}
    \item either $x_{n,s}\not\subseteq x_{n,t}$ (as strings),
    \item or $\si_n\subseteq V_t$.
    \end{enumerate}
    
    Observe that if $x_{n,s}$ is actually an initial segment of $X$,
    then we will have $\si_n\subseteq U^X\subseteq V$.  So, we will
    eventually find such a stage $t$.

    \begin{itemize}
    \item In Case 3(a), we start over with $R_n$.  Note that in
      this case $\si_n$ has come out of $U^{X_t}$, and hence $R_n$ is
      responsible for no measure inside $U^{X_t}$ at stage $t$.
    \item In Case 3(b), if $\mu([y_{n,t}])\leq 2^{-n}$, enumerate
      $y_{n,t}$ into $S_n$.  (Recall that we are allowed to use the
      representation of $\mu$ as an oracle when enumerating $S_n$.)
    \end{itemize}
  \end{enumerate}

  Since we only enumerate $y_{n,t}$ of $\mu$-measure less than $2^{-n}$
  when $\si_n$ is enumerated in $V$, we have that
  \[
  \sum_i\mu(S_i) \leq \lambda(V)<1.
  \]
  It is not hard to check that $\lambda(U^X) \leq \sum_{n=a+1}^\infty 2^{-n}
  =2^{-a}$, so we actually have that $U^X\subseteq V$.  Also notice that
  once $x_{n,s}$ is a initial segment of $X$, we will eventually
  enumerate $\si_n$ into $V$ and an initial segment of $Y$ into $S_n$.
\end{proof}

\bibliographystyle{alpha}
%\bibliography{KtrivialNCR}

\end{document}